\providecommand{\U}[1]{\protect\rule{.1in}{.1in}}
\providecommand{\U}[1]{\protect\rule{.1in}{.1in}}
\numberwithin{figure}{section}
\numberwithin{table}{section} \setlength{\oddsidemargin}{0in}
\newtheorem{theorem}{Theorem}[section]
\newtheorem{corollary}{Corollary}
\newtheorem{lemma}[theorem]{Lemma}
\newtheorem{conjecture}{Conjecture}
\newtheorem{definition}[theorem]{Definition}
\newtheorem{remark}{Remark}
\definecolor{Gray}{gray}{0.85}
\newcolumntype{a}{>{\columncolor{Gray}}c}
\newcolumntype{b}{>{\columncolor{white}}c}
\newcommand{\ddd}{\mbox{$\cdot\!\cdot\!\cdot$}}
\newcommand{\ie}{\textit{i.e.}\xspace}
\newcommand{\eg}{\textit{e.g.}\xspace}
\newcommand{\NN}{\mathbb{N}}
\newcommand{\mN}{\mathcal{N}}
\newcommand{\mC}{\mathcal{C}}
\newcommand{\ks}{k^{\star}}
\newcommand{\ys}{y^{\star}}
\newcommand{\tB}{H}
\begin{document}
\title[A dynamical approach towards Collatz conjecture] 
      {A dynamical approach towards \\ Collatz conjecture}
      
\author{Pablo Casta\~neda}

\address{\vspace{-18pt}
\newline Pablo Casta\~neda
\newline Department of Mathematics, ITAM
\newline R\a'io Hondo 1, Ciudad de M\'exico 01080, Mexico}
\email{pablo.castaneda@itam.mx}

%
%
%

\begin{abstract}
The present work focuses on the study of the renowned Collatz conjecture, also known as the $3x +1$ problem. The distinguished analysis approach lies on the dynamics 
of an iterative map in binary form. A new estimation of the enlargement 
of iterated numbers is given. Within the associated iterative map, 
characteristic periods for periodic orbits are identified.
\end{abstract}

\subjclass[2000]{Prim: 11B37, 11B50, 37E05, 37E10; Sec: 40A05, 65P20.}
 \keywords{Collatz conjecture,
iterative map,
chaotic behaviour,
binary numbers.}

\maketitle

\section{Introduction}

\noindent
The Collatz conjecture is a long standing open conjecture in number theory. Among many other names it is often known as the conjecture for the $3x + 1$ problem, which concerns to an arithmetic procedure over integers. This conjecture is based on the Collatz function given by
\begin{equation}\label{eq:collatz}
  C(x) \;=\;
  \begin{cases}
    3x+1, & \text{if }\; x \equiv 1 \quad (\text{mod } 2) \\
    x/2,  & \text{if }\; x \equiv 0 \quad (\text{mod } 2)
  \end{cases}.
\end{equation}
Notice that the Collatz problem concerns to the dynamical behavior of the above map for any positive integer $x$.

\begin{conjecture}[Collatz conjecture]
Starting from any positive integer $x$, iterations of the function $C(x)$ 
will eventually reach the number $1$. Thus, iterations enter in a cycle, 
taking successive values $\{1,\,4,\,2\}$.
\end{conjecture}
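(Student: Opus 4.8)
The plan is to recast the iteration of $C$ as a discrete dynamical system on binary strings and then to search for a quantity that decreases along orbits. First I would replace $C$ by the \emph{accelerated map} $T$, which fuses each odd step with the halving it forces: set $T(x) = (3x+1)/2$ for odd $x$ and $T(x) = x/2$ for even $x$. In binary this makes the dynamics transparent, since halving is a right shift while the step $x \mapsto (3x+1)/2$ acts on the low-order bits in a controlled way. The conjecture is then equivalent to the statement that every positive integer reaches $1$ under iteration of $T$, which I would split into two independent claims: (i) there is no nontrivial periodic orbit, and (ii) no orbit diverges to infinity.

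For claim (ii) I would try to produce a Lyapunov-type function. The natural candidate is $\log x$, together with the observation that an odd step multiplies by roughly $3/2$ while each even step multiplies by $1/2$, so a ``random'' bit pattern predicts an average multiplicative factor of $\sqrt{3}/2 < 1$ per step. I would formalise this by tracking the parity vector of an orbit and showing that, over sufficiently long windows, the number of halvings dominates the number of tripling steps; summed along the orbit, this would give $\log x$ a net negative drift and force the trajectory below any fixed threshold. Combined with a finite check that every residue below that threshold descends to $1$, this would close (ii). For claim (i) I would bound the length and size of any hypothetical cycle by using the binary structure to extract a Diophantine constraint of the form $(2^{a} - 3^{b})\,x = N$, where $N$ is an explicit positive integer determined by a cycle with $b$ odd steps and $a$ halvings, and then rule out solutions with $x > 1$.

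The hard part, and the reason the conjecture remains open, is that the drift heuristic in step (ii) is only an \emph{average} statement. The parity bits produced along a genuine orbit are not independent fair coin flips: they are rigidly determined by $x$, and there is no known mechanism forcing the observed ratio of halvings to triplings to match its expected value for \emph{every} starting integer rather than merely for almost all of them in the sense of natural density. Equivalently, $\log x$ is not a true monovariant, since an orbit may climb arbitrarily high before it descends, so no bounded, strictly decreasing quantity is available to drive a clean induction. Any honest completion of this plan must therefore supply the missing deterministic control on the parity sequence, ruling out the measure-zero set of ``bad'' orbits that the probabilistic model cannot detect, and it is precisely this gap that the dynamical, binary-form analysis developed below is intended to probe.
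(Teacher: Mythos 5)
The statement you are asked about is the Collatz conjecture itself, and the paper offers no proof of it: it is stated as an open conjecture, and everything that follows in the paper (the binary function $B$, the circle map $\tB$, the error bounds $\varepsilon_k$, Lemma~\ref{lem:tested}) is machinery aimed at constraining the problem, not resolving it. Your proposal is likewise not a proof, and to your credit you say so explicitly. The decomposition into (i) no nontrivial cycles and (ii) no divergent orbits is standard and correct, the cycle constraint $(2^a - 3^b)x = N$ is the right Diophantine reformulation, and the drift heuristic based on the average factor $\sqrt{3}/2$ per accelerated step is the classical probabilistic argument (it underlies the density results of Terras and Everett cited in the paper). But the step where you would ``show that, over sufficiently long windows, the number of halvings dominates the number of tripling steps'' for \emph{every} orbit is precisely the open problem; no such deterministic control on the parity sequence is known, and you correctly identify this as the gap. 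A plan whose load-bearing step is the unsolved problem is not a proof attempt that can be completed, so the verdict is: genuine gap, namely the entire conjecture.

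It is worth noting how your sketch relates to what the paper actually does. The paper's accelerated map (the reduced function $R$ and its normalization $B$ on $I = [1/2,1)$) is essentially your map $T$ transported to the unit interval via $M(x) = 2^{-m}x$, so the starting points coincide. Where you propose a Lyapunov function on $\log x$, the paper instead tracks the \emph{binary string length} and proves Theorem~\ref{aff:h\&t}, a finite case analysis of how three-digit heads and tails change that length under one iteration; this is a rigorous but local version of your drift heuristic, and the paper is careful to present the resulting negative average (Remark~\ref{rem:total}) as suggestive evidence, not a proof, for exactly the reason you give: local averages do not control every individual orbit. For your claim (i), the paper's route is different and partially successful: by comparing $B$ to the periodic-point-free circle map $\tB$ and bounding $\varepsilon_k = B^k(y) - \tB^k(y)$, it excludes periodic orbits of period $k < k^\star$ (about $600$ with current verification data), which is a concrete, finite-period exclusion rather than the full Diophantine argument you outline. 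Neither your plan nor the paper closes either sub-claim in full.
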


The problem has been addressed from several viewpoints along nearly a Century (see \eg  \cite{Lag10} for an overview), which come from approaches of number theory, dynamical systems, 
ergodic theory, mathematical logic, and theory of computation as well as stochastic strategies. This paper consists of a numerical and dynamical hybrid analysis. The $3x + 1$ problem have been considered as a discrete map $C:\mathbb{Z} \to \mathbb{Z}$. The map here proposed maps an interval in the real numbers into itself, yet the given map is discontinuous almost everywhere.

In Sec.~\ref{sec:notation} we introduce a new binary function capturing the dynamics given by \eqref{eq:collatz}, yet for numbers in an interval in $\mathbb{R}$. In Sec.~\ref{sec:bin} this function rises up to an iterative map which allows to draw conclusions towards Collatz conjecture. Finally, a brief discussion of results can be found in Sec.~\ref{sec:conclusions}.

\section{Some useful notation and results}
\label{sec:notation}

\noindent
The Collatz function can be simplified into more tractable functions, a well-known case is the Everett function which considers a map for odd integers $(3x + 1)/2$. This function was also analyzed by Terras, \textit{cf.} \cite{Eve77,Ter76}. The iteration is optimized with the reduce Collatz function $R(x) = (3x + 1)/2^m$ (where $2^m$ is the larger power dividing $3x + 1$), \textit{cf.} \cite{Col17}.
Even though, this reduced map aims to simplify the Collatz procedure, it is yet not straight forward.

Let be $x$ an odd number, its \textbf{binary expression} is $(a_m a_{m-1} \cdots a_0)_2$ where $a_k \in \{0,\,1\}$ for $k = 1,\,2,\,\dots,\,m-1$, and $a_0 = 1$ and $a_m = 1$. 
Notice that the reduced function satisfies that $R(x) = R(2^n x)$ for any $n \in \NN$.
However, for an even value of $R(x)$, the next iteration is divided by a power of two, which power is unknown \textit{a priori}. In order to remove this computation, let us take into account the binary expression of $y = 2^{-(m+1)}x$, \ie
$y = (0.a_m a_{m-1} \cdots a_0)_2$. Note that the 1 to 1 map from odd $x$ and $y$ is therefore given. 
Let us define the map as
\begin{equation*}
M(x) = 2^{-m}x, \qquad \text{ with $\;m\;$ the smallest $\;\NN\,$ such that } \;x < 2^m\; 
\text{ holds,}
\end{equation*}
hence $M(x) = y$ holds, and $M(2^n x) = M(x)$ is satisfied for all $x,\,n \in \NN$. Notice that $x = 1$ is related to $y = 2^{-1} = (0.1)_2$ by means of $M(x)$, odd integers $x$ are uniquely represented in the real interval $I = [1/2,\,1)$.

Now, we need to calculate
$3y + 2^{-(m+1)}$ for $y = (0.1 a_{m-1} a_{m-2} \cdots a_1 1)_2$,
which turns the problem out to calculate the \textit{total length} of the $a_k$ 
binary digits length. On the other hand, the benefit by iterating over 
$I$, is that $3y + 2^{-(m+1)}$ lies on the interval $(3/2, 4]$.
Thus, in order to get a value in $I$, an iteration must be divided by $2$, $4$ 
or $8$.
For instance, $y = (0.1)_2$ uses $m = 0$ and
$3y + 1/2 = (1.1)_2 + (0.1)_2 = (10)_2 = 4$. Upon dividing by $8$
we get that $4/8 = (0.1)_2$, \ie, the fixed point $1$.
As a further matter, $y = (0.10101 \dots 01)_2$ yields to the same relation as 
$3y + 2^{-(m+1)} = (1.1 \dots 11)_2 + (0.0 \dots 01)_2 = 4$, where $m+1$ is the
binary digits length of $y$.
For a practical use of additions in vertical form, see the sketch in Fig.~\ref{fig:add}; other uses of binary coding in number theory can be seen in \cite{Cas14}, for instance.

\begin{figure}[htb]
\centering
\includegraphics[width = 0.8\textwidth]{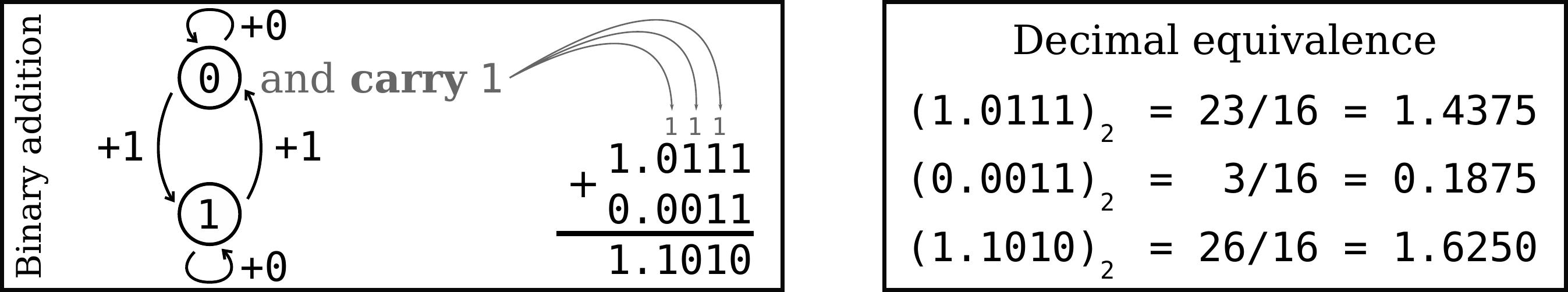}
\caption{\small Binary addition example. On the left-handed panel, we notice that $(1)_2 + (1)_2 = (10)_2$; thus on vertical additions every $1$ added to $1$ places $0$ and carry $1$. On the right-handed panel the same numbers added on the left in both binary and decimal notations.}
\label{fig:add}
\end{figure}

\begin{definition}
For a given $n \in \NN$, define $\ys_n = (0.1\{01\}^n)_2$, where $\{01\}^n$ stands
for $n$ repetitions of $01$. 
Let be $\mN$ the set of all $\ys_n$ numbers as the set of \textbf{predecesor numbers} or simply \textbf{predecessors}.
\end{definition}

\begin{remark}\label{rem:predecessors}
Notice that $x$ defined as $2^{2n + 1}\ys_n = (1\{01\}^n)_2 = 2^{2n} + 
2^{2(n-1)} + \cdots + 2^0$ which satisfies that $3x + 1 = 2^{2n+1} + 2^{2n} + 
\cdots + 2^0 + 1 = 2^{2(n+1)}$. (Notice that $\ys_n \to 2/3$ as $n \to 
\infty$.) Hence the Collatz function $C^{2n+3}(x) = 1$ and the reduced 
form $R(x) = 1$ hold. 
\end{remark}

In consequence, let us define the \textbf{binary function} by
\begin{equation}\label{eq:binary}
B(y) \;=\; 
\begin{cases}
  1/2,                 & \text{if }\; y \in \mN \\
  (3y + 2^{-(m+1)})/2, & \text{if }\; y \in [1/2,\,2/3] \setminus \mN \\
  (3y + 2^{-(m+1)})/4, & \text{if }\; y \in (2/3,\,1)
\end{cases},
\end{equation}
for all $y \in I$, where $\ell = m+1$ is the \textit{total length}, \ie, the 
``digits'' number of $y$ in binary notation.

\begin{remark}
A $y \in [1/2,\,1)$ without preimage $x \in \NN$ by $M(x)$ (\emph{e.g.} $y = 4/5$) will follow the map \eqref{map:circle} ahead, since in such a case $m$ goes to $\infty$.
\end{remark}

\begin{lemma}\label{lem:itself}
The binary function \eqref{eq:binary} maps the interval $I = [1/2,\,1)$ into itself.
\end{lemma}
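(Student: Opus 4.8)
The plan is to verify the inclusion $B(y) \in [1/2,1)$ separately on each of the three branches of the definition \eqref{eq:binary}, since $B$ is piecewise defined. The first branch is immediate: if $y \in \mN$ then $B(y) = 1/2 \in I$. For the other two branches I would work with the auxiliary quantity $s(y) := 3y + 2^{-(m+1)}$ and show that $s(y) \in (3/2,\,2]$ when $y \in [1/2,2/3]$ and $s(y) \in (2,\,4)$ when $y \in (2/3,1)$; dividing by $2$, respectively $4$, then places the image in $I$. Throughout I would use only that $y \in I$ and the trivial bound $0 < 2^{-(m+1)} \le 1/2$ on the ``tail'' term.

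The lower bounds and the case-3 upper bound are the routine part. On the second branch $y \ge 1/2$ gives $3y \ge 3/2$, so $s(y) > 3/2$ and $B(y) > 3/4 \ge 1/2$. On the third branch $y > 2/3$ forces $3y > 2$, whence $s(y) > 2$ and $B(y) > 1/2$; while $y < 1$ together with $2^{-(m+1)} \le 1/2$ gives $s(y) < 3 + 1/2 < 4$, so $B(y) < 1$. Each of these is a one-line linear estimate.

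The main obstacle is the upper bound on the second branch, i.e.\ the \emph{strict} inequality $s(y) < 2$ for $y \in [1/2,2/3] \setminus \mN$, because along the predecessor family $s(y)$ attains the value $2$ exactly, and division by $2$ would then produce $1 \notin I$. To handle this I would pass to the odd integer $X = 2^{m+1}y = (a_m \cdots a_0)_2$, so that $s(y) = (3X+1)/2^{m+1}$ and the goal becomes $3X + 1 < 2^{m+2}$. From $y \le 2/3$ one gets $3X \le 2^{m+2}$, and since a power of two is never divisible by $3$ this forces $3X \le 2^{m+2}-1$. The only way to reach $3X+1 = 2^{m+2}$ is $X = (2^{m+2}-1)/3$, which requires $m$ even, say $m = 2n$, and then $X = (1\{01\}^n)_2$; by Remark~\ref{rem:predecessors} this is precisely $2^{2n+1}\ys_n$, so $y = \ys_n \in \mN$ and is excluded on this branch. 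Hence $3X+1 < 2^{m+2}$ and $B(y) < 1$.

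Collecting the three branches then yields $B(y) \in [1/2,1) = I$ for every $y \in I$, which proves the lemma. I expect the delicate point to be exactly the bookkeeping that identifies $\mN$ with the equality locus $s(y) = 2$; in particular one must note that the left endpoint $y = 1/2 = \ys_0$ itself has to be read as a predecessor (corresponding to $n=0$), since otherwise the second branch would send $1/2$ to $1$ and the statement would fail there. Once that identification is in place, everything else reduces to the two linear estimates above.
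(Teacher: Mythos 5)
Your proof is correct, but it takes a genuinely different route from the paper's. The paper argues entirely at the level of binary digit strings: for $y\in[1/2,2/3]\setminus\mN$ it writes $y=(0.1\{01\}^k00\,a_{m-2k-3}\cdots a_0)_2$ and performs the vertical addition of $(10)_2y$, $y$ and $2^{-(m+1)}$, observing that the block $00$ forces the carry structure to keep the sum strictly below $2$; the case $y\in(2/3,1)$ is handled by the analogous addition starting from $(0.1\{01\}^k1\cdots)_2$. You instead replace all of this by two one-line linear estimates plus a single arithmetic identification of the equality locus: $3X+1=2^{m+2}$ forces $3\mid 2^{m+2}-1$, hence $m=2n$ and $X=(1\{01\}^n)_2$, i.e.\ $y=\ys_n\in\mN$, which is exactly the content of Remark~\ref{rem:predecessors} read backwards. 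Your version is arguably cleaner and makes transparent \emph{why} $\mN$ must be carved out of the second branch (it is precisely the fiber of the value $2$ under $y\mapsto 3y+2^{-(m+1)}$), whereas the paper's digit-by-digit bookkeeping is the prototype for the heads-and-tails analysis of Theorem~\ref{aff:h&t} and so earns its keep later. Both arguments share the same implicit restriction to $y$ with a finite binary expansion terminating in $1$ (i.e.\ $y=M(x)$ for odd $x$); your closing remark correctly flags that $1/2=\ys_0$ must be read as a predecessor, and the same caveat would apply to non-dyadic points such as $2/3$ itself, which neither proof (nor the lemma as used) needs to address.
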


\begin{proof}
The proof consists of three rigorous arguments based on binary adding (see Fig.~\ref{fig:add}):

(1) By definition and Remark~\ref{rem:predecessors}, the result holds for $y \in \mathcal{N}$.

(2) For $y \in [1/2,\,2/3] \setminus \mathcal{N}$, notice that
$y = (0.1 \{01\}^k 00 a_{m - 2k - 3} \cdots a_0)_2$ for $k \in \NN$, yet $m \geq 2k + 3$ holds since $y \notin \mathcal{N}$. Thus,
\begin{equation}
3y + 2^{-(m+1)} \;=\; (1.1\{11\}^k b_2 b_1 * \cdots * 1)_2 + (0.0\cdots01)_2,
\label{eq:N}
\end{equation}
where $*$ represents an unknown binary digit and both binary numbers at right have length $m+1$. Because of the $00$ in 
the original $y$, we notice that the binary structure should satisfy $b_2 b_1 
\neq 11$, thus by adding $2^{-(m+1)}$ expression \eqref{eq:N} cannot be greater 
than $2$, yet is larger than $1$. Therefore, $(3y + 2^{-(m+1)})/2 \in I$ holds.

(3) For $y \in (2/3,\,1)$, its binary representation is
$y = (0.1 \{01\}^k 1 a_n \cdots a_0)_2$, for $k \in \NN$. Thus,
$$3y + 2^{-(m+1)} \;=\; (10.\{00\}^k * \cdots * 1)_2 + (0.0\cdots01)_2,$$
for $*$ any binary digit, which gives place to $3y + 2^{-(m+1)} \in 
(2,\,4)$; in consequence, $(3y + 2^{-(m+1)})/4 \in I$.
\end{proof}

The equivalence between $B(y)$ and the reduced function $R(x)$ is given 
multiplying by $2^{\pm\ell}$, where $\ell = m + 1$ is the length of both $x$ 
and $M(x)$ in binary form. Therefore, Remark~\ref{rem:predecessors} and 
Lemma~\ref{lem:itself}, yields to the following result.

\begin{corollary}
For a given $x \in \NN$, $R(x) = 1$ if and only if $M(x) \in \mathcal{N}$ holds.
\end{corollary}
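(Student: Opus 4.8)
The plan is to reduce to the case of odd $x$ and then prove the two implications separately, with the ``if'' direction falling out of Remark~\ref{rem:predecessors} and the converse resting on a short Diophantine observation. First I would exploit the two scaling identities already recorded, $R(x) = R(2^n x)$ and $M(2^n x) = M(x)$ for all $n \in \NN$. Writing $x = 2^s x_0$ with $x_0$ odd, both the hypothesis $M(x) \in \mN$ and the conclusion $R(x) = 1$ are unchanged when $x$ is replaced by $x_0$, so it suffices to treat odd $x$. For such $x$ the map $M$ is the injection sending $x_0 = (a_m \cdots a_0)_2$ to $(0.a_m \cdots a_0)_2 \in I$, so $M(x_0) \in \mN$ holds exactly when the binary string of $x_0$ has the predecessor shape $(1\{01\}^n)_2$.

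For the implication $M(x) \in \mN \Rightarrow R(x) = 1$, suppose $M(x_0) = \ys_n$. By injectivity of $M$ on odd integers together with the identity $2^{2n+1}\ys_n = (1\{01\}^n)_2$ from Remark~\ref{rem:predecessors}, we obtain $x_0 = (1\{01\}^n)_2$; the same remark gives $3x_0 + 1 = 2^{2(n+1)}$, whence $R(x_0) = 1$.

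For the converse I would argue directly from $R(x_0) = 1$, which means $3x_0 + 1 = 2^a$ with $2^a$ the full power of two dividing $3x_0 + 1$. The key step is the number-theoretic fact that $x_0 = (2^a - 1)/3$ is an integer only when $a$ is even: since $2 \equiv -1 \pmod{3}$ one has $2^a \equiv 1 \pmod{3}$ iff $a$ is even, so $a = 2(n+1)$ for some $n \geq 0$. Then $x_0 = (4^{n+1} - 1)/3 = \sum_{j=0}^{n} 4^j = (1\{01\}^n)_2$, and applying $M$ yields $M(x_0) = \ys_n \in \mN$. An equivalent route, closer to the binary picture of the paper, is to use the stated equivalence between $B$ and $R$ (conjugation by $2^{\pm\ell}$, $\ell = m+1$), under which $R(x) = 1$ corresponds to $B(M(x)) = 1/2$; from the definition \eqref{eq:binary} one checks that the second and third branches never output $1/2$, since $y \geq 1/2$ forces $3y + 2^{-\ell} > 1$ (ruling out $1/2$ after dividing by $2$) and $y > 2/3$ forces $3y + 2^{-\ell} > 2$ (ruling it out after dividing by $4$), so the value $1/2$ is attained precisely on the predecessor branch, i.e.\ for $M(x) \in \mN$.

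The main obstacle is the converse implication, specifically the parity constraint on the exponent $a$ in $3x_0 + 1 = 2^a$: one must rule out $R(x_0) = 1$ being realized by an odd power of two, which is exactly what the congruence argument modulo $3$ supplies. Everything else is bookkeeping in binary. In the $B$-formulation the analogous obstacle is the verification that the only branch of \eqref{eq:binary} capable of returning the fixed value $1/2$ is the first one, which is precisely the interval-arithmetic computation displayed above.
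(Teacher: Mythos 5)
Your proof is correct, and your primary route is genuinely different from the paper's. The paper offers no explicit proof: it simply notes that $B$ and $R$ are conjugate under multiplication by $2^{\pm\ell}$ and declares that Remark~\ref{rem:predecessors} together with Lemma~\ref{lem:itself} yields the corollary; unpacked, its argument is exactly your second route, namely that $R(x)=1$ corresponds to $B(M(x))=1/2$ and that the strict inequalities established in the proof of Lemma~\ref{lem:itself} (branch two lands strictly above $1$ before halving, branch three strictly above $2$ before quartering) prevent the non-$\mN$ branches from ever returning $1/2$. Your main argument instead works directly with $R$: the ``if'' direction is Remark~\ref{rem:predecessors} verbatim, and for the ``only if'' direction you observe that $R(x_0)=1$ forces $3x_0+1=2^a$, that $2^a\equiv 1 \pmod 3$ requires $a$ even, and hence $x_0=(4^{n+1}-1)/3=\sum_{j=0}^n 4^j=(1\{01\}^n)_2$, so $M(x_0)=\ys_n\in\mN$. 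This congruence argument is more elementary and self-contained: it does not lean on the somewhat informally stated equivalence between $B$ and $R$, and it makes explicit the step the paper leaves silent, namely why the predecessors are the \emph{only} odd integers with $R(x)=1$. What the paper's route buys in exchange is consistency with its binary framework, since the same interval estimates are reused from Lemma~\ref{lem:itself}. One minor caveat: your reduction to odd $x$ invokes the paper's identity $R(2^nx)=R(x)$, which presupposes the intended reading of $R$ on even inputs (strip powers of two first); this is the paper's convention rather than a gap in your argument.
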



\begin{conjecture}[Binary Collatz conjecture]
For any $x \in \NN$, $y = M(x)$ belongs to $I$. Then, the iterative map $B^k(y) \to 1$ as $k \to \infty$. In other words, the binary function \eqref{eq:binary} has $1/2$ as a global atractor for the domain $I = [1/2,\,1)$ in binary notation.
\end{conjecture}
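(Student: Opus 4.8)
The plan is to split the statement into the two components that any global-attraction theorem for an interval map requires: first, that $1/2$ is the unique fixed point and that $B$ admits no other periodic orbit; second, that every orbit is eventually trapped and therefore must enter $\mN$. The fixed point is immediate, since $1/2 = \ys_0 \in \mN$ forces $B(1/2) = 1/2$. The geometric engine is the piecewise-affine multiplier structure of \eqref{eq:binary}: off $\mN$ the map acts as $y \mapsto (3y + 2^{-\ell})/2$ with slope $\tfrac32 > 1$ on the lower branch $[1/2,2/3]$, and as $y \mapsto (3y + 2^{-\ell})/4$ with slope $\tfrac34 < 1$ on the upper branch $(2/3,1)$. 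Thus $B$ expands on the lower half and contracts on the upper half, and the long-run fate of an orbit is controlled by how often it visits each region: writing $p$ for the asymptotic frequency of lower-branch visits, the orbit-averaged logarithmic multiplier is $p\log\tfrac32 + (1-p)\log\tfrac34$, which is negative, hence contracting towards $1/2$, exactly when $p < \log(4/3)/\log 2$.

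For the cyclic component I would argue via the correspondence $y = M(x)$ together with the equivalence between $B$ and the reduced function $R$ recorded before the Corollary, under which a period-$q$ orbit of $B$ would give a nontrivial cycle of the reduced Collatz map. Along such a cycle the composed map is affine with slope $3^{s+t}/2^{s+2t}$, where $s$ and $t$ count lower- and upper-branch visits; its unique fixed point is a rational determined entirely by the carry pattern of the binary additions of Fig.~\ref{fig:add}, and requiring this fixed point to be an admissible $y = M(x)$ of finite binary length imposes a Diophantine system in $s$, $t$, and the positions of the $00$ and $\{01\}^{k}1$ blocks identified in the proof of Lemma~\ref{lem:itself}. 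The plan is to show this system has no solution beyond the trivial one, with the period identification announced for Section~\ref{sec:bin} closing off the finitely many candidate periods.

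For the trapping component I would seek a Lyapunov function. The naive candidate, the binary length $\ell(y)$, is not monotone, which is precisely the enlargement of iterated numbers that the abstract quantifies, so instead I would look for a weighted potential $V(y)$ whose expected decrement under one step of $B$ is strictly negative away from a neighbourhood of $1/2$, using the upper-branch contraction to dominate the lower-branch expansion. A complementary route is ergodic: construct an absolutely continuous invariant measure for $B$ through a Ruelle--Perron--Frobenius transfer operator, show its mass concentrates near $1/2$, and deduce that $B^{k}(y) \to 1/2$ for Lebesgue-almost-every $y$. Either route would settle the conjecture for typical starting values.

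The hard part is the passage from ``almost every'' to ``every,'' which neither route survives. The ergodic argument only governs generic reals, whereas the conjecture concerns the measure-zero set of $y = M(x)$ with $x \in \NN$, the dyadic rationals of odd type that are exactly the integers in disguise and on which any absolutely continuous measure is silent. Equivalently, the Lyapunov decrement is negative only on average, and forcing it to be negative along every individual orbit requires excluding orbits that persistently favour the expanding lower branch, that is, that keep $p \ge \log(4/3)/\log 2$ forever. Controlling this correlation between branch choice and digit structure for all integers is the unresolved arithmetic core of the Collatz problem; the present reformulation sharpens the obstruction and supplies the cycle-exclusion machinery, but I do not expect the trapping step to be fully resolvable without genuinely new input of exactly this type.
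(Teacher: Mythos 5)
The statement you set out to prove is presented in the paper as a \emph{conjecture}: it is the Collatz conjecture itself, transported to $I=[1/2,1)$ by the bijection $M$. The paper therefore contains no proof to compare against, only partial evidence and partial results: the heads-and-tails length estimates of Theorem~\ref{aff:h&t}, the error-bound theorem of Section~\ref{sec:bin} realizing $B$ as a perturbation of the circle map $\tB$, and Lemma~\ref{lem:tested}, which excludes periodic orbits of period $k<\ks$ (numerically $k<600$ with current verification data). Your cycle-exclusion component genuinely parallels that last piece: your cycle multiplier $3^{s+t}/2^{s+2t}$ is the paper's $3^k/2^{\mu(k)+a}$ from \eqref{eq:kmap}, and the Diophantine system you gesture at is what the paper replaces by the quantitative comparison of $1/2+\varepsilon_k$ with the critical point $c_k$. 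You are also explicit, and correct, that neither of your routes closes the argument; no one should expect otherwise here.

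Beyond the admitted gap, however, the trapping half of your plan is structurally unworkable rather than merely incomplete, and this is worth naming. First, there is no trapping neighbourhood of $1/2$: on $[1/2,2/3]\setminus\mN$ the map has slope $3/2$ and pushes points \emph{away} from $1/2$, so an orbit reaches $1/2$ only by landing exactly on the countable set $\mN$. A Lyapunov function with strictly negative decrement away from a neighbourhood of $1/2$ cannot exist, and the ergodic route is hopeless in principle, not just on the measure-zero set of integers: the unperturbed map $\tB$ is conjugate to a rigid rotation (in $u=\log_2 y$ coordinates it is $u\mapsto u+\log_2(3/2) \bmod 1$), so generic orbits equidistribute over $I$ and converge to nothing; no absolutely continuous invariant measure ``concentrates near $1/2$''. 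Second, your branch-frequency criterion is exactly critical and hence vacuous: the rotation structure forces the asymptotic frequency of lower-branch visits to be $2-\log_2 3=\log(4/3)/\log 2$, so the averaged multiplier $p\log\tfrac32+(1-p)\log\tfrac34$ is identically $0$ --- this is just the statement that $3^k y/2^{\mu(k)}$ remains in $[1/2,1)$. (A negative Lyapunov exponent would in any case only give convergence of nearby orbits to one another, not convergence to $1/2$.) The convergence mechanism must be arithmetic --- the perturbation terms $2^{-(m+1)}$ steering orbits into $\mN$ --- which is exactly the part the paper also leaves open in Conjecture~\ref{con:C}.
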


For a given $y_0 \in [1/2,\,1)$, we first notice that the behavior of the 
length of $y_k = B^k(y_0)$ was successfully analyzed in \cite{BP08} for Baker's 
map. This approach for the Collatz function remains nonetheless unclear. The numbers in 
Table~\ref{table:initial} are the size of the initial length in binary 
form against the maximal length found in several of runs considering $500$ 
random numbers of each particular length. The $200$ of these runs condensed 
in Table~\ref{table:initial} give the computed maximum length and maximum 
iterations in order to achieved the \textit{ground state} $1/2$. In 
Table~\ref{table:initial}, we notice that these numerical simulations suggest 
that in the binary form, the result may provide an useful guide to its proof. 


\begin{table}[ht]
  \centering
    \caption{Computation of maximal length and maximum number of iterations for binary numbers in $I$ with fixed initial length.}\label{table:initial}
    \begin{tabular}{{r|rrrrrrr}}
Initial length & $50$ & $100$ & $500$ & $1,000$ & $5,000$ & $10,000$ & $50,000$ 
\\ \hline
Maximum length & $+14$ & $+13$ & $+13$ & $+12$ & $+13$ & $+12$ & $+12$
\\ \hline
Maximum stop time & $346$ & $449$ & $1,720$ & $3,136$ & $13,541$ & 
$25,927$ & $124,514$
    \end{tabular}
\end{table}

\begin{remark}\label{rem:initial}
In Table~\ref{table:initial} the column for numbers with $50,000$ binary digits was obtained with ten runs of the routine iterating $500$ random numbers of this length. Nine out of ten runs returned the recorder numbers for \textit{Maximum length} and \textit{Maximum stop time}, $50,012$ and $124,514$ respectively. 
In one case only, the result gives a maximum length of $+9$ binary digits and $118,914$ as maximum number of iterations before achieving $1/2$.
\end{remark}

Notice that the stopping time for the reduced function $R(x)$ and for the 
binary function $B(M(x))$ are equal. Nevertheless, 
Table~\ref{table:initial} and simulations suggest that the binary form has a regular
behaviour for the length in their binary expansion: a binary 
length increasing of $14$ digits represents approximately $1.6 \times 10^{4}$ 
in decimal notation, thus a number in the first column of 
Table~\ref{table:initial} may change from $2^{50}\approx 10^{15}$ to 
$2^{64} \approx 2\times 10^{20}$.
The {\it hailstone numbers} introduced in \cite{Hay84} represent iterations where there is an abrupt increase of magnitude from, say $x_n$ to $C(x_n)$. In Fig.~\ref{fig:trayectory}, we may compare the smoothing from decimal expressions against their binary representations.

\begin{figure}[htb]
\centering
\includegraphics[width = \textwidth]{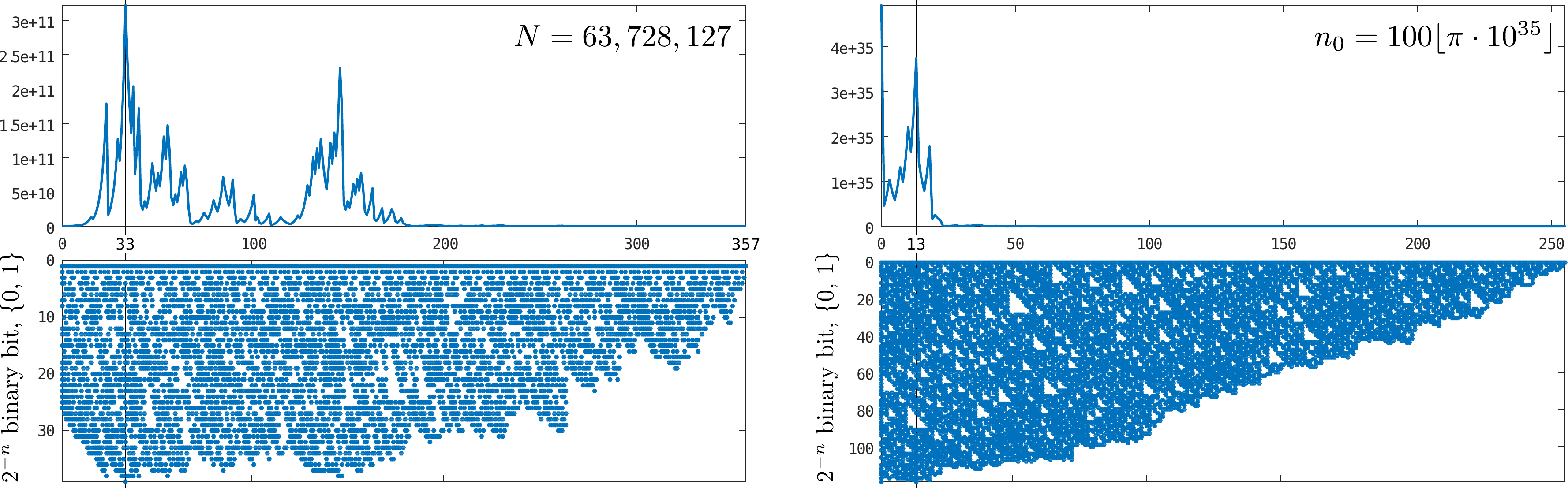}
\caption{\small Iterations of $x_0$ taken as $N := 63,728,127$ (left-hand panels), and $n_0 := 100 \lfloor \pi \cdot 10^{35} \rfloor$ (right-hand panels). On top panels the iterated $x_n \in \NN$; on the bottom panels, the respective $y_n = M(x_n)$ in binary: blank spaces for zeros and dots for ones binary bits, top lines stand for binary point. 
Notice that $n_0$ has $38$ decimal digits, $119$ binary digits, and $255$ steps instead of $529$ steps with the original Collatz function, \cite{Lag10,Roos}. 
It has a hailstone number $x_{13}$ with $35$ decimal digits and $116$ binary digits.
Notice that $N$ has $8$ decimal digits, $26$ binary digits; its hailstone number $x_{33} = 322,205,345,153$ has $12$ decimal digits and $39$ binary digits.}
\label{fig:trayectory} 
\end{figure}

The growing thin of the string length is determinant in satisfying Collatz conjecture. Table~\ref{table:initial} suggests that for a length $m$, the longest representation has a maximum length of $m + 14$. Let us estimate the dynamics of the length for $y_0 = (0.1 a_{m-1} \cdots a_1 1)_2$ to $y_1 = B(y_0) = (0.1 b_{\tilde{m}-1} \cdots b_1 1)_2$. To do so, we count the ``head'' and ``tail'' behaviour: we define a \textbf{head} with the $n$ first binary digits, and a \textbf{tail} with the $n$ last binary digits. 
For example, $(0.1001)_2$ is mapped to $(1.1100)_2$ by $3y + 2^{-(m+1)}$, now we count the length grow with the binary point as reference. We say that it increases by one at the beginning with a $1$ at the left of the binary point and decreases by two at the very end, because of the two last zeros and since $y_0$ originally has four binary digits at the right of the binary point. In so doing, we notice the head adds one to the total length and the tail subtracts two. In this case, the combination of head and tail behaviour is such that as a result of the iteration $y_1 = B(y_0)$, the total length is decreased by one.
Let us focus on $n = 3$ binary digits and notice that there are four possible configurations both for $3$-digit heads and $3$-digit tails.

\begin{theorem}[Heads and tails]\label{aff:h&t}
The head with three binary digits $h_1 = (0.100)_2$ adds one to the total length, the heads $h_2 = (0.101)_2$,  $h_3 = (0.110)_2$,  $h_4 = (0.111)_2$ add two to the total length by means of \eqref{eq:binary}. (A $h_2$ head may add only one digit.) The tail with three binary digits $t_1 = (\cdots 001)_2$ subtracts two to the total length, the tails $t_2 = (\cdots 011)_2$, $t_4 = (\cdots 111)_2$ subtract one to the total length and the tail $t_3 = (\cdots 101)_2$ subtracts at least three to the total length.
\end{theorem}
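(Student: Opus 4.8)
The plan is to collapse the whole statement into one length identity and then read off its two summands from the leading and trailing bits separately. First I would pass through the integer preimage: if $y=(0.a_m\cdots a_0)_2\in I$ has total length $\ell=m+1$, write $x=2^{\ell}y=(a_m\cdots a_0)_2$, an odd integer, so that
\begin{equation*}
z\;:=\;3y+2^{-(m+1)}\;=\;2^{-\ell}\,(3x+1).
\end{equation*}
Factoring $3x+1=2^{s}x'$ with $x'$ odd (so $x'=R(x)$ and $s=v_2(3x+1)$ is its $2$-adic valuation), and letting $p\in\{1,2\}$ be the exponent for which $B(y)=z/2^{p}=2^{-(\ell+p-s)}x'$, Lemma~\ref{lem:itself} guarantees $B(y)\in[1/2,1)$; hence the bit-length $\ell'$ of $x'$ is forced to equal $\ell+p-s$. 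This yields the master identity $\ell'-\ell=p-s$, so the length change splits additively into a head part $+p$ (the renormalizing division) and a tail part $-s$ (the power of two absorbed by $3x+1$). It then suffices to determine $p$ from the first three bits and $s$ from the last three.

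For the heads I would use $2/3=(0.\overline{10})_2$ as the threshold separating the two nontrivial branches of \eqref{eq:binary}. A head $h_1=(0.100)_2$ confines $y$ to $[1/2,5/8)\subset[1/2,2/3]$, so $p=1$ and the head adds one; heads $h_3=(0.110)_2$ and $h_4=(0.111)_2$ force $y\ge 3/4>2/3$, so $p=2$ and the head adds two. The head $h_2=(0.101)_2$ is the genuinely ambiguous case: its range $[5/8,3/4)$ straddles $2/3$, so whether $p=2$ (adds two) or $p=1$ (adds only one) depends on the bits beyond the head, which explains the parenthetical caveat. In the vertical-addition picture of Fig.~\ref{fig:add}, the map $3y+2^{-(m+1)}$ pushes the magnitude past $1$ when $y\le 2/3$ and past $2$ when $y>2/3$, creating exactly $p$ new leading positions before the division renormalizes into $I$.

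For the tails I would compute $s=v_2(3x+1)$ from the trailing bits of $x$ by elementary congruences (equivalently, by carrying out the vertical addition of $3x$ and $1$). If the last two bits are $11$, then $x\equiv3\pmod4$ and $3x+1\equiv2\pmod4$, so $s=1$; this is both $t_2=(\cdots011)_2$ and $t_4=(\cdots111)_2$, each subtracting one. If the last three bits are $001$, then $x\equiv1\pmod8$ and $3x+1\equiv4\pmod8$, so $s=2$ and $t_1$ subtracts two. If the last three bits are $101$, then $x\equiv5\pmod8$ and $3x+1\equiv0\pmod8$, whence $s\ge3$ and $t_3$ subtracts at least three. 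These three cases exhaust all odd $x$, and combined with the head contribution $+p$ they yield all eight assertions.

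The hard part will be precisely the two configurations that refuse to be decided by three bits: the head $h_2$ and the tail $t_3$. For $t_3=(\cdots101)_2$ one obtains only the lower bound $s\ge3$, because the exact valuation of $3x+1$ is governed by how far the repeating block $01$ persists inward, i.e.\ by proximity to the predecessor structure $\ys_n=(0.1\{01\}^n)_2\to2/3$ of Remark~\ref{rem:predecessors}, where $s$ (and hence the length drop) grows without bound. The symmetric ambiguity at $h_2$ reflects the same accumulation toward $2/3$ from the leading end. Thus the head/tail bookkeeping is exact except exactly at the bits aligned with the $2/3$ accumulation point, where only one-sided bounds are available; recording ``$t_3$ subtracts at least three'' (rather than a fixed number) is the correct and unavoidable formulation, and it is this unbounded $t_3$ behaviour that carries the genuine Collatz difficulty.
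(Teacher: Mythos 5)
Your proposal is correct and proves all eight assertions, but it verifies them by a genuinely different mechanism than the paper. The paper's proof carries out the vertical binary addition $3y+2^{-(m+1)} = y + 2y + 2^{-(m+1)}$ explicitly in each of the eight cases and reads the answer off the resulting digit patterns: the heads are classified by how many digits appear to the left of the binary point in the sum (with the $h_2$ exception detected by the condition $a=b=0$ on the next two bits), and the tails by counting the trailing zeros produced by the carries. You instead lift to the odd integer $x = 2^{\ell}y$, derive the master identity $\ell'-\ell = p-s$ with $p$ the renormalizing exponent and $s = v_2(3x+1)$, and then compute $p$ by interval comparison against the branch point $2/3$ and $s$ by congruences mod $4$ and mod $8$. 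Your route makes rigorous something the paper leaves informal -- that the length change really does split additively into independent head and tail contributions (this is exactly what justifies the table in Remark~\ref{rem:total}) -- and it makes the one-sided bound for $t_3$ transparent, since $x\equiv 5 \pmod 8$ gives $3x+1\equiv 0\pmod 8$ and nothing more. What the paper's digit-level computation buys in exchange is the explicit form of the image's leading and trailing bits (used later when comparing with Hew's string-length analysis) and a concrete bit-pattern criterion for when $h_2$ adds only one digit, where you give the equivalent but less constructive criterion $y\le 2/3$. One small point to make explicit if you write this up: your identification of $p$ with the number of new leading positions needs the containments $3y+2^{-(m+1)}\in(1,2)$ on the $L$-branch and $\in(2,4)$ on the $R$-branch, which is precisely what the proof of Lemma~\ref{lem:itself} establishes, so your citation of that lemma is doing slightly more work than just placing $B(y)$ in $I$.
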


\begin{proof}
There are eight cases to consider. As above, $*$ stands for any unknown binary digit, notice we are using vertical adding as in Fig.~\ref{fig:add}. Notice that adding $2^{-(m+1)}$ remains implicit for the heads within the $*$, as can be seen by the binary notation. Notice that $3 = (11)_2 = (10)_2 + (1)_2$, so we get that
\begin{equation*}
h_1: \begin{array}{r@{} @{}c@{} @{}r@{} @{}r@{} @{}r@{} @{}r@{}}
     0&.&1&0&0&*\cdots \\
+ \; 1&.&0&0&*&*\cdots \\
\hline
{\bf 1}&.&1&*&*&*\cdots
\end{array}\;,
\;\;
h_2: \begin{array}{r@{} @{}c@{} @{}r@{} @{}r@{} @{}r@{} @{}r@{} @{}r@{}}
     0&.&1&0&1&a&*\cdots \\
+ \; 1&.&0&1&a&b&*\cdots \\
\hline
{\bf 10}&.&*&*&*&*&*\cdots
\end{array}\;,
\;\;
h_3: \begin{array}{r@{} @{}c@{} @{}r@{} @{}r@{} @{}r@{} @{}r@{}}
     0&.&1&1&0&*\cdots \\
+ \; 1&.&1&0&*&*\cdots \\
\hline
{\bf 10}&.&*&*&*&*\cdots
\end{array}\;,
\;\;
h_4: \begin{array}{r@{} @{}c@{} @{}r@{} @{}r@{} @{}r@{} @{}r@{}}
     0&.&1&1&1&*\cdots \\
+ \; 1&.&1&1&*&*\cdots \\
\hline
{\bf 10}&.&1&*&*&*\cdots
\end{array}\;.
\end{equation*}
Thus, $h_1$ adds one to the total length, and the others heads add less than three to the total length.
If for $h_2$, $a$ and $b$ are zero, then the sum is $(1.111*)_2$ and this head adds only one to the total length.

Similarly, for the tails we add $x$ in the first row and $(10)_2x + 2^{-(m+1)}$ on the second row., 
We obtain
\begin{equation*}
t_1: \begin{array}{@{}r @{\,}r@{} @{}r@{} @{}r@{} @{}r@{}}
     \cdots & *&0&0&1 \\
+ \; \cdots & 0&0&1&1 \\
\hline
\cdots&*&1&{\bf 0}&{\bf 0}
\end{array}\;,\quad
\quad
t_2: \begin{array}{@{}r @{\,}r@{} @{}r@{} @{}r@{} @{}r@{}}
     \cdots & *&0&1&1 \\
+ \; \cdots & 0&1&1&1 \\
\hline
\cdots & *&0&1&{\bf 0}
\end{array}\;,\quad
\quad
t_3: \begin{array}{@{}r @{\,}r@{} @{}r@{} @{}r@{} @{}r@{}}
     \cdots & *&1&0&1 \\
+ \; \cdots & 1&0&1&1 \\
\hline
\cdots & *&{\bf 0}&{\bf 0}&{\bf 0}
\end{array}\;,\quad
\quad
t_4: \begin{array}{@{}r @{\,}r@{} @{}r@{} @{}r@{} @{}r@{}}
     \cdots & *&1&1&1 \\
+ \; \cdots & 1&1&1&1 \\
\hline
\cdots & *&1&1&{\bf 0}
\end{array}\;.
\end{equation*}
By noticing that the binary point is fixed $m+1$ positions at the left of this adding, notice that $t_2$, $t_4$ subtract one to total length, $t_1$ subtracts two and $t_3$ substracts at least three to the total length.
\end{proof}

\begin{remark}\label{rem:total}
From Theorem~\ref{aff:h&t}, the contribution of all three digit head/tail possibilities to total length is depicted in the following table. Notice, that by adding all cells the total sum is less or equal than zero.
\newcolumntype{R}[1]{>{\raggedleft\let\newline\\\arraybackslash\hspace{0pt}}m{#1}}
\begin{center}
\begin{tabular}{ l | R{2.05cm} R{2.05cm} | r | R{2.05cm} }
Heads $\backslash$ Tails & 
$t_1 = (\ddd 001)_2$ & $t_2 = (\ddd 011)_2$ & $t_3 = (\ddd 101)_2$ & $t_4 = (\ddd 111)_2$ \\
\hline
$h_1 = (0.100\ddd\!)_2$ &     $-1\qquad$ &      $0\qquad$ & \cellcolor{Gray} $\leq -2\qquad$ &      $0\qquad$ \\
\hline
$h_2 = (0.101\ddd\!)_2$ & \cellcolor{Gray} $\leq 0\qquad$ & \cellcolor{Gray} $\leq 1\qquad$ & \cellcolor{Gray} $\leq -1\qquad$ & \cellcolor{Gray} $\leq 1\qquad$ \\
\hline
$h_3 = (0.110\ddd\!)_2$ &      $0\qquad$ &      $1\qquad$ & \cellcolor{Gray} $\leq -1\qquad$ &      $1\qquad$ \\
$h_4 = (0.111\ddd\!)_2$ &      $0\qquad$ &      $1\qquad$ & \cellcolor{Gray} $\leq -1\qquad$ &      $1\qquad$
\end{tabular}
\end{center}
\end{remark}

As can be seen in Remarks~\ref{rem:initial} and \ref{rem:total} not only Collatz conjecture might be true, but they also point out to a novel way of analyzing a binary structure by each step iteration. In the following section, by taking into account a \textit{circle map} of the binary function \eqref{eq:binary} a visualization of the iterates of $B(y)$ is improved.

%
%
%
%

\section{Associated binary map and orbits in the iterative map}
\label{sec:bin}

\noindent
We now analyze the binary function \eqref{eq:binary} as an iterative map over the interval $I = [1/2,\,1)$ into itself. The one-dimensional dynamics is well understood when the map is a diffeomorphism or a homeomorphism (see {\it e.g.} \cite{dMvS93,MiTh88}). 
Nevertheless, notice that the binary map $y_{n+1} = B(y_n)$ is highly discontinuous. For example, let be $y = (0.1011)_2$ and a number $z = (0.101011 \cdots 1)_2$ which is closed. Upon applying the binary function, we have $B(y) = (0.10001)_2$ and $B(z) =  (0.10000011 \cdots 1)_2$. Thus, the distance $|y - z| = 2^{-\ell}$ depends on the length $\ell$ of $z$, and can be bounded by any $\delta > 0$, however $|B(y) - B(z)| > 2^{-6}$ uniformly.

\subsection{A reduced circle map}
\noindent
By inspection, this discontinuous behaviour of \eqref{eq:binary} arises from both elements in $\mN$ and subtracting terms $2^{-(m+1)}$. Hence, we consider $B(y)$ as the perturbation of map
\begin{equation}
\label{map:circle}
\tB(y) \;=\; 
\begin{cases}
  3y/2, & \text{if }\; y \in L \;:=\; [1/2,\,2/3) \\
  3y/4, & \text{if }\; y \in R \;:=\; [2/3,\,1)
\end{cases},
\end{equation}
which defines a \emph{circle homeomorphism} on $I$, see \cite{dMvS93}. This map has a unique critical value $c \in I$ such that $\tB(c) = 1/2$ where the map is discontinuous. This map may be extended to $\mathbb{R}$ by a \textit{lifting} for being a continuous function, with \emph{e.g.} $\tilde{H}(1/2) = 3/4$. Also, $\tB$ is continuous and strictly increasing in each component of $I\setminus\{c\}$, it is right-continuous at $c$, and $\tB$ maps both boundary points to a single point in the interior of $I$. Moreover, this map does not comprise fixed points.

An orbit of a point $y$ is given by the set of its iterations, say, $\mathcal{O}(y) = \{H^k(y)\}_{k \in \NN},$ where $\tB^k(y)$ stands for $k$-th iteration of $y$.
Upon taking into account this set, we notice that
\begin{equation}\label{eq:kmap}
\tB^k(y) \;=\; 
\begin{cases}
  3^ky/2^{\mu(k)},   & \text{if }\; y \in [1/2,\,2^{\mu(k)}\!/3^k) \\
  3^ky/2^{\mu(k)+1}, & \text{if }\; y \in [2^{\mu(k)}\!/3^k,\,1)
\end{cases}, \;\text{ where }\; \mu(k) := \bigg\lfloor{k \frac{\ln 3}{\ln 2}}\bigg\rfloor.
\end{equation}
Thus, its critical point $c_k$ is $2^{\mu(k)}\!/3^k$, and we notice that there are no fixed points of $\tB^k$ with $k \in \NN$. As consequence there are no periodic points. In addition, it is \textit{topologically transitive} as the orbit of any open interval will cover eventually the whole interval $I$ when $k \to \infty$.
Now we define the inverse map $\tB^{-1}:I \to I$ such that $y \in \tB^{-1}(\tB(y))$ and $\tB^{-1}(J)$ is the set given for all values $z \in I$ such that $\tB(z) \in J$.

Therefore, $\tB(y)$ is a well defined circle map without chaotic behaviour. This clearly is in contradistinction to $B(y)$ as map. Nonetheless, we get information from $\tB(y)$ in order to investigate the veracity of Collatz conjecture.

\subsection{The binary map as a perturbation}
\noindent
As an illustration of $\tB(y)$ dynamics, let us choose as a motivation the set $T_5$ given by all numbers that can be expressed with binary length $\ell = 5$. This set is written with the first element $\tau_1 = 1/2$ and recursively with $\tau_{i+1} = \tau_i + 2^{-5}$. Thus, we have the representation $T_5 \;=\; \{\tau_1 = (0.10000)_2,\, \tau_2 = (0.10001)_2,\, \dots,\, \tau_{16} = (0.11111)_2\}$,
related with a disordered list of the odd numbers from $1$ to $31$ as $\tau_1 = M(1)$, $\tau_2 = M(17)$, $\tau_3 = M(9)$, $\tau_4 = M(19)$, 
\dots, $\tau_{16} = M(31)$. Notice that $\tau_1 = (0.1)_2$, $\tau_5 = (0.101)_2$ and $\tau_6 = (0.10101)_2$ are in $\mN$, which have orbits $\mathcal{O}(\tau_i) = \{\tau_i,\,\tau_1,\,\dots\}$. Direct computations show that $B(\tau_i) \notin T_5$ for $i = 7,\,11,\,16$, however, their orbits also converge to $\tau_1$ for these cases,
\begin{eqnarray*}
 \mathcal{O}(\tau_7) &=& \{\tau_7   ,\,(100011)_2,\,(110101)_2,\,\tau_5,\,\tau_1,\,\dots\}, \\
 \mathcal{O}(\tau_{11}) &=& \{\tau_{11},\,(101001)_2,\,\tau_{16},\,\dots,\,\tau_1,\,\dots\},
\end{eqnarray*}
where the orbit of $\tau_{16}$ has $39$ iterations before $\tau_1$. In three occasions, the iteration $H^k(\tau_{16})$ has $12$ digits length.

To compute $\tB(y)$ instead of $B(y)$, an error can be estimated for $y \in J := I \setminus (T_5 \cup \mN)$, as we know that $\tB^k(y) \to 1/2$ for $y \in T_5 \cup \mN$. Notice that the calculated length for any $y \in J$ is $\ell \geq 6$, thus
$$\varepsilon_1 \;:=\; B(y) - \tB(y) \;\leq\; \left\{ \begin{array}{ll}
  2^{-7}, & \text{if }\; y \in L\setminus J \\
  2^{-8}, & \text{if }\; y \in R\setminus J \\
\end{array}\right\} \;\leq\; 2^{-7}.$$
As $B(y)$ is larger than $\tB(y)$, the absolute value is not included.

We define an error bound $\varepsilon_k := B^k(y) - \tB^k(y)$ for $k \in \NN$ and $y \in I$ on computing of $\tB(y)$. The orbit is well understood for $y \in I$ such that $B(y) \in T_5 \cup \mN$, so in each iteration we remove these relevant data from $I$. An inspection of plots for $B$ and $\tB$, see Fig.~\ref{fig:error}, shows that $y$ in $L = [1/2,\,2/3)$ satisfies $3y/2 = \tB(y) \approx B(y)$ and $y$ in $R = [2/3,\,1)$ satisfies $3y/4 = \tB(y) \approx B(y)$. Moreover, when $y \in L$ their images $B(y),\,\tB(y) \in R$, but the converse does not hold since $\tB(y) \in [1/2,\,3/4)$ for $y \in R$ although $B(y) \in [1/2,\,3/4 + \epsilon)$, where $0 \leq \epsilon \ll 1$.

The alternation of multiplying by $3/2$ or $3/4$ is the same in $\tB$ and $B$, once we remove conflicted data given by the numbers $\gamma_k$ ahead. In so doing, we notice that $\tB(y) \in L$ when $y \in [2/3,\,8/9)$, however for $2/3 \ll y < 8/9$, $B(y)$ may belong to $R$. As $8/9 = (0.\{111000\}^\infty)_2$ for an infinite repetition of the curly bracket, and $y = (0.\{111000\}^k0*\cdots*1)_2 < 8/9$ implies $B(y) \in L$, 
we concern with three types of smaller numbers near to $8/9$ in binary form:
$$\alpha_k \;=\; (0.\{111000\}^k1)_2,\quad \beta_k \;=\; (0.\{111000\}^k11)_2,\quad \gamma_k \;=\; (0.\{111000\}^k111)_2,$$
where $k$ is the number of repetitions of the digits in curly brackets. From calculations: $B(\alpha_k) = (0.1\{01\}^{3k})_2 = \ys_{3k}$ and $B(\beta_k) = (0.1\{01\}^{3k+1})_2 = \ys_{3k+1}$, thus $B^2(\alpha_k) = B^2(\beta_k) = (0.1)_2$. Then $\alpha_k$, $\beta_k$ belong to the preimage of $\mN$ under $B$, see Remark~\ref{rem:predecessors}. 
As a $\gamma_k$ satisfies $B(\gamma_k) > 2/3$ without a clear convergence to $\mN$, we denote by $\mC$ to the set of all $\gamma_k$. 

Let $T_\ell$ to be a \textit{tested set} with length $\ell$: all natural numbers $x \leq 2^\ell$ give $y = M(x)$ with length $\ell$, the set is tested if the orbit of $y \in T_\ell$ contains $\tau_1$. Then, recursively define $J_1 := I \setminus (T_\ell \cup \mN \cup B^{-1}(\mN) \cup \mC)$ and $J_n := J_{n-1} \setminus B^{-1}(J_{n-1})$.


\begin{conjecture}\label{con:C}
For any $y \in \mC$, with preimage $x$ in $\NN$, there exists $k \in \NN$ such that $B^k(y) = 1/2$. Therefore, $\mC$ satisfies the Collatz conjecture.
\end{conjecture}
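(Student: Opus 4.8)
The plan is to pass to the integer side and run a strong induction on the index $k$ of $\gamma_k$. First I would record that the preimage of $\gamma_k=(0.\{111000\}^k111)_2$ under $M$ is
\begin{equation*}
x_k \;=\; 2^{6k+3}\gamma_k \;=\; (\{111000\}^k111)_2 \;=\; \frac{2^{6(k+1)}-1}{9},
\end{equation*}
the last identity following from $2^{6}\equiv 1\ (\mathrm{mod}\ 9)$, exactly in the spirit of Remark~\ref{rem:predecessors}. Since the stopping time of $B$ at $\gamma_k$ equals that of $R$ at $x_k$, it suffices to exhibit an $N$ with $R^{N}(x_k)=1$. The point of departure from the neighbours $\alpha_k,\beta_k$—which land in $\mN$ after one step—is that $\gamma_k$ maps just above $2/3$, so I would first pin this map down exactly and then show the orbit descends far below $x_k$.

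The second step is the \emph{clean descent}. A direct induction on $j$, carried out with the carry-free binary additions of Lemma~\ref{lem:itself} and Theorem~\ref{aff:h&t}, gives $B(\gamma_k)=(0.\{10\}^{3k+1}11)_2$ and, more generally,
\begin{equation*}
B^{\,j+2}(\gamma_k)\;=\;M\!\left(3^{\,j}\,2^{\,6k+4-2j}+1\right),\qquad 0\le j\le 3k+1 .
\end{equation*}
Writing $e_j=6k+4-2j$, each odd iterate satisfies $3\big(3^{\,j}2^{e_j}+1\big)+1=3^{\,j+1}2^{e_j}+4=4\big(3^{\,j+1}2^{e_j-2}+1\big)$, so while $e_j\ge 3$ (that is, $0\le j\le 3k$) the block of high zeros absorbs every carry, the power of two drops by exactly two, and $R$ sends one iterate to the next. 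At $j=3k+1$ the exponent has fallen to $2$ and the regime ends: after $3k+3$ iterations the orbit reaches $M(z_k)$ with $z_k:=4\cdot 3^{\,3k+1}+1\approx 12\cdot 27^{k}$, far below $x_k\approx 7\cdot 64^{k}$.

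The third step is to close the gap below $z_k$. One further reduced step gives $R(z_k)=(3^{\,3k+2}+1)/2^{\,v}$, where $v\in\{1,2\}$ is the exponent of the largest power of two dividing $3^{\,3k+2}+1$, a value of size $\approx 3^{\,3k+2}/4$. From here I would invoke the exhaustion built into the construction: since this number is smaller than $x_k$, appeal to the tested sets $T_\ell$ together with the recursion $J_n=J_{n-1}\setminus B^{-1}(J_{n-1})$ to argue that the tail has already been certified to reach $\tau_1=1/2$, thereby feeding the inductive hypothesis on $k$. An alternative route is to upgrade the averaged bound of Remark~\ref{rem:total} into a genuine Lyapunov estimate for the binary length along the orbit of $\gamma_k$, whose very regular head/tail pattern should keep the bookkeeping tractable, and to conclude that the length must eventually attain its minimal value.

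The hard part is precisely this third step. The clean descent is rigorous and self-contained, but it deposits the orbit onto $z_k$, a number whose binary expansion is governed by the digits of $3^{\,3k+1}$ and hence carries no exploitable structure; past $z_k$ the dynamics is a generic instance of the $3x+1$ map. Consequently ``descending below $x_k$'' does not by itself close the induction, for that would require Collatz to be known for all smaller integers—the very statement at issue—so the argument is circular unless the exhaustion $\bigcup_n\big(I\setminus J_n\big)$ is shown independently to cover these tails. Equivalently, the averaged inequality ``$\le 0$'' of Remark~\ref{rem:total} must be strengthened into a strict contraction with a summable defect; producing such a length functional is, as for Collatz at large, the genuine obstacle.
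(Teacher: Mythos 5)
The statement you set out to prove is labelled a \emph{conjecture} in the paper, and the paper contains no proof of it: the concluding section says explicitly that a proof of Conjecture~\ref{con:C} is still needed, and the whole point of Section~\ref{sec:bin} is to \emph{reduce} the Collatz conjecture to this open statement about $\mC$. So there is no argument of the author's to compare yours against; the only question is whether your proposal closes the gap, and, as you yourself concede in your final paragraph, it does not. Your first two steps are correct and in fact sharper than anything recorded in the paper: the identity $x_k=2^{6k+3}\gamma_k=(2^{6(k+1)}-1)/9$ checks out (for $k=0$ it gives $7=63/9$, for $k=1$ it gives $455=4095/9$), and the descent formula $B^{j+2}(\gamma_k)=M\bigl(3^{j}2^{6k+4-2j}+1\bigr)$ for $0\le j\le 3k+1$ follows from $3\bigl(3^{j}2^{e}+1\bigr)+1=4\bigl(3^{j+1}2^{e-2}+1\bigr)$ with the right-hand factor odd while $e\ge3$; this refines the paper's bare remark that $B(\gamma_k)>2/3$ ``without a clear convergence to $\mN$''.

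The genuine gap is exactly where you locate it, and it is fatal to the argument as a proof. The clean descent deposits the orbit on $M\bigl((3^{3k+2}+1)/2^{v}\bigr)$ with $v\in\{1,2\}$, an integer whose binary expansion carries no structure the map respects, and ``the orbit has dropped below $x_k$'' cannot feed a strong induction on $k$: the integers below $x_k$ are not the smaller $x_j$, they are essentially all integers below $x_k$, so the inductive hypothesis you would need is the full Collatz conjecture up to $x_k$ --- precisely the statement under discussion. The two escape routes you gesture at are not available either: the tested sets $T_\ell$ cover only finitely many lengths, and neither the paper nor your proposal shows that the exhaustion $J_n=J_{n-1}\setminus B^{-1}(J_{n-1})$ terminates or that $\bigcup_n (I\setminus J_n)=I$; and Remark~\ref{rem:total} only gives a non-strict inequality $\le 0$ \emph{averaged over all sixteen head/tail pairs}, with no control over which pairs actually occur along a given orbit, so it cannot be promoted to a strict Lyapunov estimate without essentially new input. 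Your proposal is best read as a correct partial reduction (the orbit of every $\gamma_k$ provably shrinks for $3k+3$ steps) together with an honest identification of the remaining obstruction; the conjecture itself remains open after your argument, just as it does in the paper.
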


\begin{remark}
As for a number $z = n/3^k > 1/2$ with $n \in \NN$ we have that $H^k(z)$ has also a preimage $M(x)$ for some $x \in \NN$, then Conjecture~\ref{con:C} should hold for $z$.
\end{remark}

\begin{theorem}
The error bound $\varepsilon_n := B^n(y) - \tB^n(y)$ for any given $y$ in the subset $J_n \subset I$ defined above is 
$$0 \leq \varepsilon_{2n} \leq 7 \left[\left(\frac{9}{8}\right)^{\!\!n} - 1\right] 2^{-\ell}, \qquad
0 \leq \varepsilon_{2n+1} \leq \left( \frac{15}{2} \left[\left(\frac{9}{8}\right)^{\!\!n} - 1\right] + \frac{1}{2}\right) 2^{-\ell},$$
for $\ell-1$ the smallest length of the elements of the tested set $T_\ell$.
\end{theorem}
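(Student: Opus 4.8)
The plan is to convert the description of $B$ as a perturbation of $\tB$ into a linear recursion for the error and then to sum a geometric series in pairs of steps. For $y\in J_n$ every conflicting datum has been removed, so along the first $n$ iterates the points $u_k:=B^k(y)$ and $v_k:=\tB^k(y)$ lie in the same component, $L$ or $R$, of $I$, whence both maps apply the same factor. Writing $\varepsilon_k=u_k-v_k$, one step splits as
\[
\varepsilon_{k+1}=\bigl(B(u_k)-\tB(u_k)\bigr)+\bigl(\tB(u_k)-\tB(v_k)\bigr)=a_k+r_k\,\varepsilon_k ,
\]
where $r_k=3/2$ on $L$ and $r_k=3/4$ on $R$, while by \eqref{eq:binary} and \eqref{map:circle} the local perturbation obeys $a_k:=B(u_k)-\tB(u_k)\le 2^{-(\ell+1)}$ on $L$ and $a_k\le 2^{-(\ell+2)}$ on $R$, with $\ell$ the common lower bound for the lengths of $u_0,\dots,u_{n-1}$ that the definition of $J_n$ secures. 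Since $\varepsilon_0=0$ and $a_k,r_k>0$, induction gives $\varepsilon_k\ge 0$, which is the stated lower bound; it remains to bound $\varepsilon_k$ from above.

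The engine is the elementary computation $\tB(L)=(3/2)[1/2,2/3)=[3/4,1)\subset R$, so the region itinerary never contains two consecutive $L$'s. Hence over any two consecutive steps, whose regions $s_1s_2$ cannot equal $LL$, an incoming error $\eta$ is sent to $r_1r_2\,\eta+(r_2a_1+a_2)$; inspecting the three admissible cases $LR$, $RL$, $RR$ yields the uniform bounds $r_1r_2\le 9/8$ and $r_2a_1+a_2\le \tfrac32 2^{-(\ell+2)}+2^{-(\ell+1)}=\tfrac78 2^{-\ell}$, the maximum of the additive term occurring for $RL$. Thus, regardless of itinerary, every pair of steps acts by $\eta\mapsto A\eta+b$ with $A\le 9/8$ and $b\le \tfrac78 2^{-\ell}$.

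For the even index I group the $2n$ steps as $(0,1),(2,3),\dots,(2n-2,2n-1)$ and iterate this inequality from $\varepsilon_0=0$:
\[
\varepsilon_{2n}\;\le\;\tfrac78 2^{-\ell}\sum_{j=0}^{n-1}\Bigl(\tfrac98\Bigr)^{\!j}\;=\;\tfrac78 2^{-\ell}\cdot 8\Bigl[\bigl(\tfrac98\bigr)^{\!n}-1\Bigr]\;=\;7\Bigl[\bigl(\tfrac98\bigr)^{\!n}-1\Bigr]2^{-\ell}.
\]
For the odd index I peel off the initial step, $\varepsilon_1=a_0\le \tfrac12 2^{-\ell}$, and then group the remaining steps as $(1,2),(3,4),\dots,(2n-1,2n)$, so that the same pairwise inequality gives
\[
\varepsilon_{2n+1}\;\le\;\Bigl(\tfrac98\Bigr)^{\!n}\tfrac12 2^{-\ell}+\tfrac78 2^{-\ell}\cdot 8\Bigl[\bigl(\tfrac98\bigr)^{\!n}-1\Bigr]\;=\;\Bigl(\tfrac{15}{2}\Bigl[\bigl(\tfrac98\bigr)^{\!n}-1\Bigr]+\tfrac12\Bigr)2^{-\ell},
\]
the last equality following by collecting the two $(9/8)^n$ contributions, $\tfrac12+7=\tfrac{15}{2}$ and $\tfrac{15}{2}-7=\tfrac12$. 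This is exactly the claimed estimate.

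The hard part is not this arithmetic but the very first step: the assertion that $B$ and $\tB$ select the same branch throughout the first $n$ iterations and that the length of $u_k$ never drops below $\ell$. This is precisely what the successive excision of $\mN$, $B^{-1}(\mN)$, $\mC$ and the tested set $T_\ell$ in the definition of $J_n$ is designed to secure; the delicate point is that, unlike the length of $v_k=\tB^k(y)$ which grows by one or two at each step, the length of $u_k=B^k(y)$ may shrink through the tail cancellations quantified in Theorem~\ref{aff:h&t}, so one must verify that remaining in $J_n$ prevents the iterate both from entering the already tested short numbers and from crossing the $\gamma_k$ thresholds of $\mC$, where the two branch choices of $B$ and $\tB$ would otherwise diverge.
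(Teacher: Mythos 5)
Your proposal is correct and follows essentially the same route as the paper: both bound the per-step perturbation by $2^{-\ell}/2^{i}$ with $i=1$ on $L$ and $i=2$ on $R$, use the fact that the power $1$ must be followed by the power $2$ (no two consecutive $L$'s) to identify the alternating itinerary as the worst case, and sum the resulting geometric series to the identical closed forms. Your two-step affine recursion is merely a tidier bookkeeping of the paper's nested worst-case expression, and, like the paper, you correctly locate the real burden in the unproved claim that membership in $J_n$ keeps $B^k(y)$ and $\tB^k(y)$ on the same branch with all intermediate lengths at least $\ell$.
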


\begin{proof}
In order to show this result, we highlight two facts. Firstly, notice that the difference between $B(y)$ and $\tB(y)$ for $y \in J_1$ is $2^{-(m+1)}/2^i$ where $m + 1 \geq \ell$ and $i$ is equal to $1$ or $2$, as $J_2$ neglects the elements that will be mapped into $J_1$, the length of any $y \in J_2$ satisfies that the length of $B(y)$ plus one is also larger than $\ell$. Secondly, $B^n(y)$ and $\tB^n(y)$ have the same power $i$ in the same step; both $B^{n-1}(y)$ and $\tB^{n-1}(y)$ are in the same subinterval $L$ or $R$ as we neglected the elements in $\mC$ and the preimages of $\mN$. Therefore, in the expansion of the difference between $B^n(y)$ and $\tB^n(y)$, we can rule out the $y$'s, \eg,
\begin{eqnarray*}
\varepsilon_3 &=& \frac{\displaystyle 3  \frac{\displaystyle 3 \frac{3y + 2^{-(m_1+1)}}{2^{i_{1}}} + 2^{-(m_2+1)}}{2^{i_2}} + 2^{-(m_3+1)}}{2^{i_3}} - \frac{3^3 y}{2^{\mu(3) + a}} \\
 &=& \frac{\displaystyle 3  \frac{\displaystyle 3 \frac{2^{-(m_1+1)}}{2^{i_1}} + 2^{-(m_2+1)}}{2^{i_2}} + 2^{-(m_3+1)}}{2^{i_3}},
\end{eqnarray*}
where $a$ is either $0$ or $1$, see Eq.~\eqref{eq:kmap}, however $i_1 + i_2 + i_3 = \mu(3) + a$ holds.

Now, notice that $2^{-(m+1)} \leq 2^{-\ell}$ in each iteration.
Recall that the powers $i_1,\,i_2,\,\cdots$ alternate from $2$ to $1$ or $2$ and from $1$ to $2$, thus the bound is enlarger when $i_n = 1$, $i_{n-1} = 2$, $i_{n-2} = 1$, $\cdots$.
These errors are bounded by setting all lengths $m+1$ as $\ell$ and the power laws alternating as $1,\,2,\cdots,\,1,\,2,\,1$ where the last one must be $1$. In other words, we split in even an odd iterations as follows
\begin{eqnarray*}
\varepsilon_{2n+1} &\leq& 2^{-1}(3\cdot 2^{-2}(\cdots(3\cdot2^{-2}(3\cdot2^{-1} + 1) + 1)\cdots) + 1) \, 2^{-\ell} \\
 &=& \left( \left[ \sum_{k=1}^n 3^{2k}2^{-(3k+1)} + 3^{2k-1}2^{-3k} \right] + \frac{1}{2} \right) 2^{-\ell} \\
 &=& \left( \frac{15}{2} \left[\left(\frac{9}{8}\right)^{\!\!n} - 1\right] + \frac{1}{2}\right) 2^{-\ell}, \\
\varepsilon_{2n}   &\leq& 2^{-1}(3\cdot 2^{-2}(\cdots(3\cdot2^{-1}(3\cdot2^{-2} + 1) + 1)\cdots) + 1) \, 2^{-\ell} \\
 &=& \left[ \sum_{k=1}^n 3^{2k-1}2^{-3k} + 3^{2(k-1)}2^{-3k+1} \right]2^{-\ell}
 \;=\; 7 \left[\left(\frac{9}{8}\right)^{\!\!n} - 1\right] 2^{-\ell},
\end{eqnarray*}
which prove the proposition.
\end{proof}


\begin{figure}[htb]
\centering
\includegraphics[width = 0.32\textwidth]{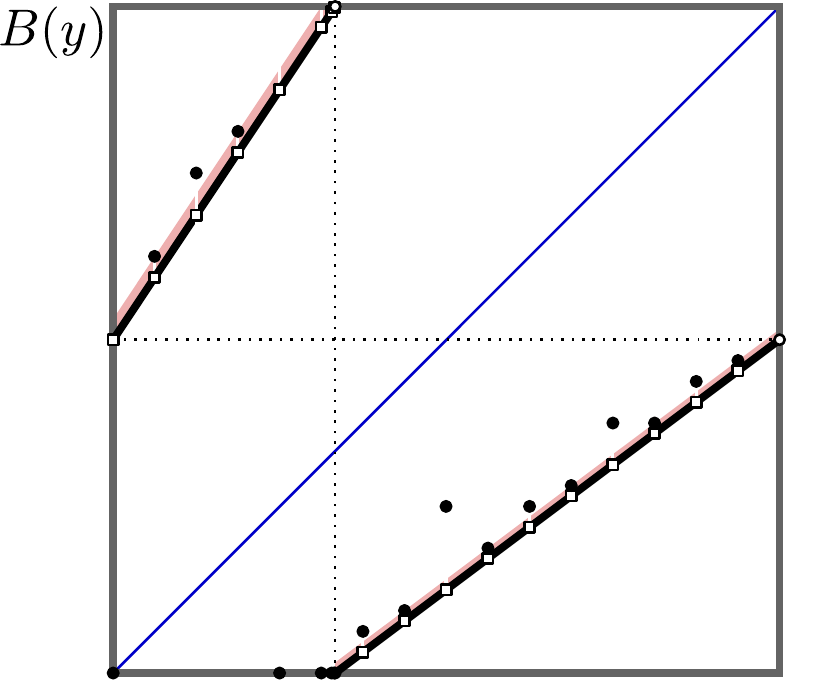}
\includegraphics[width = 0.32\textwidth]{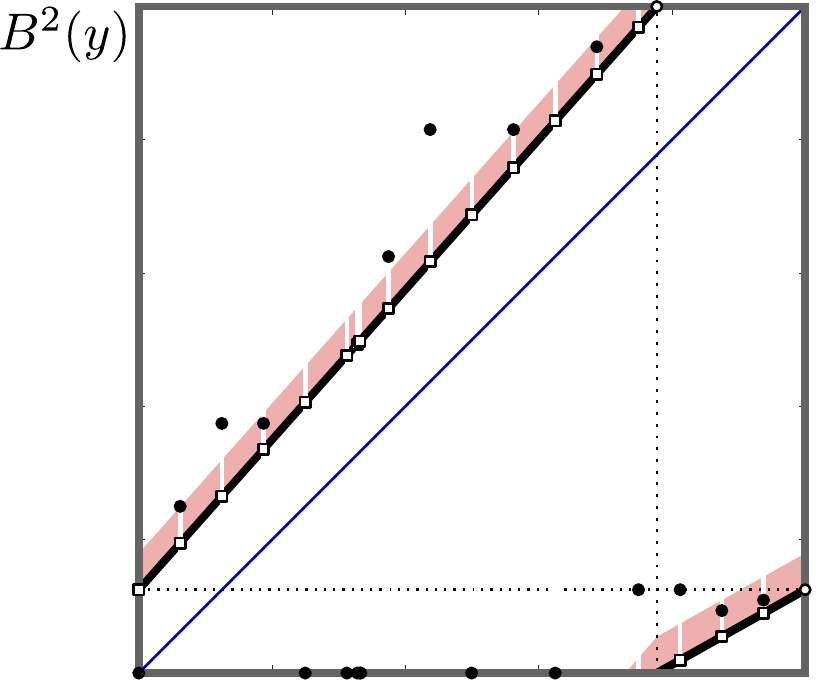}
\includegraphics[width = 0.32\textwidth]{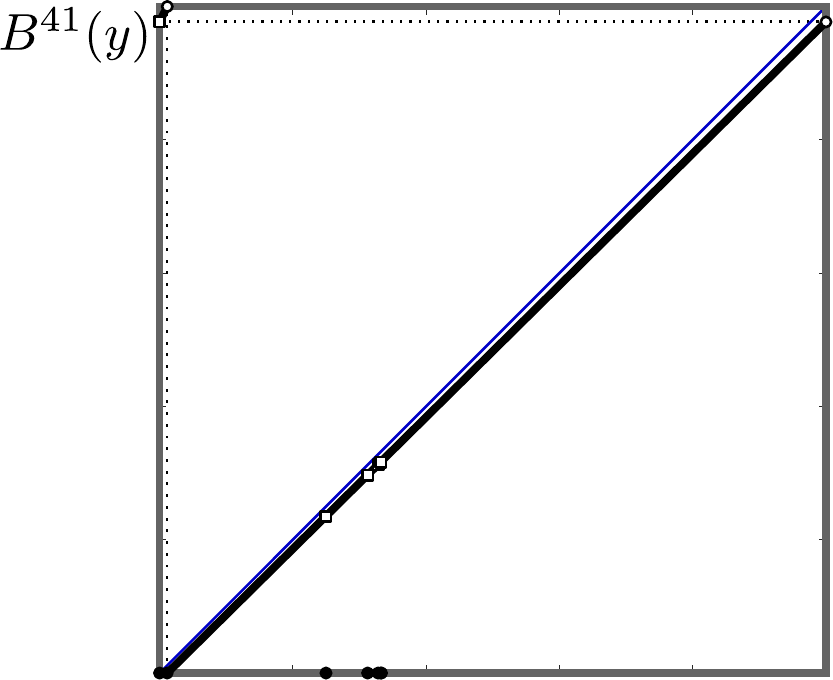}
\caption{\small Samples of $B^k(y)$ as perturbation of $\tB^k(y)$, shaded regions are potential places for the evaluations of the binary map. Left-hand and middle panels have the exact values of $B^k(\tau_i)$ for $\tau_i \in T_5$. (The error bound is twice for graphical purposes.) Here cases $k = 1,\,2,\,41$; the latter has $c_k$ near $1/2$; actually the next iteration runs far away the identity, since $c_{42} = 0.674352\dots$.}
\label{fig:error}
\end{figure}


Upon following these results, we notice that the binary map $B(y)$ do not have periodic points for $n \to \infty$. However, as $c_k$ can be close to $1/2$ as is shown in Fig.~\ref{fig:error}(b,c), the binary map may have periodic points of period $k$ if $1/2 + \varepsilon_k > c_k$ holds. In consequence we have the following result.

\begin{lemma}
\label{lem:tested}
Upon assuming a tested set $T_\ell$ consisting of all the binary expressions with length $\ell - 1$, and considering that the orbit of every $y \in T_\ell$ contains $\tau_1 = 1/2$, then there are no periodic orbits of $B$ of period $k < \ks$, where $\ks$ is the first number satisfying $1/2 + \varepsilon_{\ks} > c_{\ks}$.
\end{lemma}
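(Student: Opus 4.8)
The plan is to use that the circle homeomorphism $\tB$ of \eqref{map:circle} has no periodic points, and to measure precisely how much room the perturbation $B = \tB + \varepsilon$ has to bend an iterate back onto the diagonal. First I would quantify the distance between the graph of $\tB^k$ and the identity. By \eqref{eq:kmap}, on the lower branch $[1/2,c_k)$ one has $\tB^k(y) = 3^k y/2^{\mu(k)}$ with slope $3^k/2^{\mu(k)} > 1$, hence $\tB^k(y) > y$; on the upper branch $[c_k,1)$ one has $\tB^k(y) = 3^k y/2^{\mu(k)+1}$ with slope below $1$ and $\tB^k(c_k) = 1/2 < c_k$, hence $\tB^k(y) < y$. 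A short computation gives $y - \tB^k(y) = y\,(1 - 3^k/2^{\mu(k)+1})$, which is increasing in $y$ and therefore minimized at $y = c_k$, with value exactly $c_k - 1/2$. Thus $\tB^k$ never meets the diagonal, stays strictly above it on $L$ and strictly below it on $R$, and the least gap, attained only in the limit $y \to c_k^{+}$, equals $c_k - 1/2$.

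Next I would feed in the error estimate of the preceding Theorem. By the construction of the sets $J_n$ the iterates of $B$ and of $\tB$ remain on the same branches $L$ or $R$, so for $y \in J_n$ one may write $B^k(y) = \tB^k(y) + \varepsilon_k(y)$ with $0 \le \varepsilon_k(y) \le \varepsilon_k$. Assume, for contradiction, that $B$ admits a periodic orbit of period $k < \ks$ and let $y$ be a point of it, so $B^k(y) = y$. On the lower branch $B^k(y) \ge \tB^k(y) > y$, which is impossible; on the upper branch $B^k(y) = y$ forces $\varepsilon_k(y) = y - \tB^k(y) \ge c_k - 1/2$. But $k < \ks$ means, by the very definition of $\ks$, that $1/2 + \varepsilon_k \le c_k$, that is $\varepsilon_k \le c_k - 1/2$. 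The chain $c_k - 1/2 \le \varepsilon_k(y) \le \varepsilon_k \le c_k - 1/2$ can hold only with equalities throughout, which forces $y = c_k$ and $\varepsilon_k(c_k) = c_k - 1/2 > 0$.

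It remains to dispose of the orbit points removed from the $J_n$ and of this single borderline value. Every $y$ in $\mN$, in $B^{-1}(\mN)$, or in the tested set $T_\ell$ has an orbit containing $\tau_1 = 1/2$, by hypothesis and by Remark~\ref{rem:predecessors}; since $1/2$ is a fixed point of $B$, such a $y$ is eventually fixed and hence cannot lie on a nontrivial periodic orbit. A hypothetical periodic orbit therefore meets only $J_n$, where the previous paragraph applies. Finally, the forced value $c_k = 2^{\mu(k)}/3^k$ is non-dyadic, so it carries no preimage $M(x)$ with $x \in \NN$ and, by the convention for infinite length, $B$ agrees with $\tB$ along its orbit; thus $B^k(c_k) = \tB^k(c_k) = 1/2 \neq c_k$, giving $\varepsilon_k(c_k) = 0$ and contradicting $\varepsilon_k(c_k) = c_k - 1/2 > 0$. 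Hence $B$ has no periodic orbit of period $k < \ks$.

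The step I expect to be the main obstacle is the bookkeeping behind the uniform bound $\varepsilon_k(y) \le \varepsilon_k$ along an entire period: one must certify that each iterate of the candidate orbit lands in the appropriate $J_n$ and that $B$ and $\tB$ never separate onto different branches of \eqref{map:circle}. This is exactly where the removal of $\mC$ enters, and hence where a tacit appeal to Conjecture~\ref{con:C} is made; making that exclusion unconditional is the genuinely hard point. By comparison, the borderline equality $1/2 + \varepsilon_k = c_k$ is mild, being settled by the non-dyadic, preimage-free nature of the critical point $c_k$.
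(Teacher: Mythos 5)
Your argument is correct and follows essentially the same route as the paper, which in fact states Lemma~\ref{lem:tested} with no written proof beyond the informal observation preceding it (that $\tB^k$ never meets the diagonal and that the band of width $\varepsilon_k$ above its graph can only reach the identity once $1/2+\varepsilon_k > c_k$); your computation of the minimal gap $c_k - 1/2$ at the critical point and the resulting chain of inequalities is exactly the intended mechanism. Your write-up is actually more careful than the paper's: you explicitly dispose of the orbit points excluded from $J_n$ and of the borderline value $y = c_k$, and you correctly flag that excluding periodic orbits passing through $\mC$ still rests on Conjecture~\ref{con:C} --- a caveat the paper not only omits but inverts in the remark following the lemma, where it asserts that the lemma itself shows there are no periodic points in $\mC$.
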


%

\begin{remark}
The estimation in Lemma~\ref{lem:tested} shows that there are no periodic points in $\mC$. Notice that as the convergence test has been double checked for all $x \leq 2^{60}$ (\it{cf.} \cite{Roos}), thus we compute $\ks$ with these data showing that there are no periodic orbits of period $k < 600$. In this case, $\ell = 60$, and we have $c_{600} = 0.507858\cdots$ and $\varepsilon_{600} = 0.013460\cdots$ satisfying $c_{600} < 0.5 + \varepsilon_{600}$ for the first $k$.
\end{remark}

\section{Conclusions}
\label{sec:conclusions}

\noindent
This problem and its conjecture began as a curiosity of an arithmetical game, but it is in connection with various other areas of mathematics that have made it a respectable topic for mathematical research. Here we have presented a novel link with an one-dimensional dynamics which in turn has restricted the problem in showing that $\mC$ satisfies Collatz conjecture. This approach seems promising as we have proposed a strategy to remove an overseen difficulty which appears to reside in an inability to analyze the pseudorandom nature of successive iterates of $C(x)$. Here, its randomness can be controlled by the error bounds $\varepsilon_k$.

We have highlighted a path concerning the length in binary expressions which it seem to overpass the hailstone numbers given in \cite{Hay84} as well as a novel procedure that for a given set of tested numbers satisfying the conjecture, eliminates several periods for orbits. Hence, a proof for Conjecture~\ref{con:C} is needed to rigorously make this statement.

During the revision time of this manuscript, Hew's article \cite{Hew19} was published. 
It shows that the binary map $B(y)$ in \eqref{eq:binary}  is actually a fractal under the metric of the dyadic rational numbers in $[1/2,\,1)$, \emph{i.e.}, 
the odd natural numbers via the $M(x)$ map. The incomplete analysis for string length of $y \in [1/2,\,1)$ under $B(y)$ is natural through Theorem~\ref{aff:h&t}: Hew 
removes the ending repetition of $01$, therefore, instead of $y = (0.1* \cdots *11\{01\}^k)_2$, take $\hat{y} = (0.1* \cdots *11)_2$; 
only the tail $t_3$ is prohibited. Then, for the $L$-domain, the $h_1$ and $h_2$ heads are possible, so by adding the allowed tails, the string length can grow or shrink. 
For the $R$-domain, the $h_2$, $h_3$, $h_4$ heads are allowed and the string length grow or keep the same length.

\section*{Acknowledgments}
\noindent
I am grateful to V\a'ictor F. Bre\~na-Medina (\,ITAM\,) for detailed readings and corrections.
I am also grateful to the anonymous DCDS reviewer for his enlightening comments.
The author was partially supported by Asociaci\'on Mexicana de Cultura A.C.

\medskip
\medskip


\begin{thebibliography}{99}
\bibitem{BP08}
\textsc{A.~Bengochea and E.~Pi\~na} (2008)
{``Symmetry lines and periodic points in the Baker's map''},
\textit{Qual. Theory Dyn. Syst.} \textbf{7}: 73--86.

\bibitem{Cas14}
\textsc{P.~Casta\~neda} (2014) {``A simple proof of a theorem on Mersenne primes, and a generalization'' (Spanish)}, \textit{Miscel\'anea Mat.} {\bf 58}: 67--76.

\bibitem{Col17}
\textsc{L.~Colussi} (2011)
{``The convergence classes of Collatz function''},
\textit{Theoret. Comput. Sci.} \textbf{412}: 5409--5419.
( See also \textsc{L.~Colussi} (2017) ``Some contributions to Collatz conjecture'', \textit{arXiv preprint}, in \url{https://arxiv.org/abs/1703.03918}. )


\bibitem{dMvS93}
\textsc{W. de Melo and S. van Strien} (1993)
\textit{One-Dimensional Dynamics}. Springer, Berlin.

\bibitem{MiTh88}
\textsc{J. Milnor and W. Thurston} (1988)
\textit{On iterated maps of the interval}.
Lecture Notes in Mathematics {\bf 1342}, Springer.

\bibitem{Eve77}
\textsc{C.J.~Everett} (1977)
{``Iteration of the number-theoretic function $f(2n) = n$, $f(2n+1) = 3n+2$''},
\textit{Adv. Math.} \textbf{25}: 42--45.

\bibitem{Hay84}
\textsc{B. Hayes} (1984)
{``Computer recreations: the ups and downs of hailstone numbers''},
\textit{Sci. Amer.} \textbf{250}: 10--16.

\bibitem{Hew19}
\textsc{P.C. Hew} (2019)
{``Collatz on the dyadic rationals in $[0.5,\,1)$ with fractals: how bit strings change their length under $3x + 1$''},
\textit{Exp. Math.} (Online).



\bibitem{Lag10}
\textsc{J.C. Lagarias} (2010)
{``The $3x + 1$ problem: An overview''},
\textit{The Ultimate Challenge: The $3x + 1$ problem}: 3--29.


\bibitem{Roos}
\textsc{E. Roosendaal}'s web page
{``On the $3x + 1$ problem''} in
\url{http://www.ericr.nl/wondrous/}.
(Accessed in 10 January 2019.)


\bibitem{Ter76}
\textsc{R.~Terras} (1976)
{``A stopping time problem on the positive integers''},
\textit{Acta Arith.} \textbf{30}: 241--252.

\end{thebibliography}
\end{document}